\newtheorem{lemma}{Lemma}[section]
\newtheorem{proposition}[lemma]{Proposition}
\newtheorem{corollary}[lemma]{Corollary}
\newtheorem{theorem}[lemma]{Theorem}
\theoremstyle{remark}
\newtheorem{remark}[lemma]{Remark}
\newtheorem{example}[lemma]{Example}
\theoremstyle{definition}
\newtheorem{definition}[lemma]{Definition}
\DeclareMathOperator{\hav}{hav}
\DeclareMathOperator{\lip}{Lip}
\title{On the existence of fixed points for typical nonexpansive mappings on spaces with positive curvature}
\author[1]{Christian~Bargetz}
\author[1]{Michael~Dymond}
\author[1]{Emir~Medjic}
\author[2]{Simeon~Reich}
\affil[1]{University of Innsbruck}
\affil[2]{The Technion---Israel Institute of Technology}
\date{15th June 2020}
\begin{document}

\maketitle

\begin{abstract}
  We show that the typical nonexpansive mapping on a small enough subset of a CAT($\kappa$)-space is a \textit{contraction in the sense of Rakotch}. By typical we mean that the set of nonexpansive mapppings without this property is a $\sigma$-porous set and therefore also of the first Baire category. Moreover, we exhibit metric spaces where strict contractions are not dense in the space of nonexpansive mappings. In some of these cases we show that all continuous self-mappings have a fixed point nevertheless.
  \vskip2mm
  \noindent\textbf{Mathematics Subject Classification (2020).} 47H09, 54E52
  \vskip2mm
  \noindent\textbf{Keywords.} Nonexpansive mappings, Rakotch contractions, spaces of
  positive curvature, $\sigma$-porous sets  
\end{abstract}

\section{Introduction}

The celebrated fixed point theorem of Brouwer asserts that every continuous self-mapping of a bounded, closed 
and convex subset of a Euclidean space has a fixed point. Unfortunately, this result does not generalise to 
bounded, closed and convex subsets of infinite-dimensional Banach spaces, as in this case 
the unit sphere is a continuous retract of the closed unit ball by Dugundji's extension theorem \cite{Dugundji}. 
As a matter of fact, the unit sphere is known to be even a Lipschitz retract of the closed unit ball in all 
infinite-dimensional Banach spaces \cite{BS}. 
For this reason, in the infinite-dimensional setting it is more interesting to only consider the nonexpansive mappings
instead of the continuous ones. Recall that a mapping
\[
  f\colon X \to X
\]
on a metric space $(X,\rho)$ is called \emph{nonexpansive} if it satisfies
\[
  \rho(f(x),f(y)) \leq \rho(x,y)
\]
for all $x,y\in X$. In~\cite{Browder} F.~E.~Browder showed that every nonexpansive self-mapping 
of the closed unit ball of the Hilbert space $\ell_2$ has a fixed point. This result was later extended to 
nonexpansive self-mappings of bounded, closed and convex subsets of all uniformly convex spaces Banach spaces and beyond; see, for example,~\cite{GK1990, GR1984, Piatek}.

In~\cite{BM1976, BM1989} F.~S.~de~Blasi and J.~Myjak took a different approach to this problem: 
instead of aiming at a characterisation of the Banach spaces with the property that every nonexpansive 
self-mapping of a bounded, closed and convex subset has a fixed point, they showed that the typical 
nonexpansive self-mapping of a bounded, closed and convex subset of an \emph{arbitrary} Banach space has a 
fixed point. In this case, the term ``typical'' means that the set of mappings without this property is 
very small in the topological sense. In the first of these articles, it is shown that this set is the complement of a
dense $G_\delta$-set and in the second one that it is even $\sigma$-porous. 
Roughly speaking, the concept of porous sets is a quantitative version of the concept of nowhere dense sets, 
which was introduced by A.~Denjoy in~\cite{Denjoy}. For a detailed discussion of different concepts of porosity, 
we refer the interested reader to~\cite{Zajicek}.

It seems natural to ask whether it is possible to explain the existence of fixed points of 
typical nonexpansive mappings in terms of fixed point theorems. Already in their original papers, 
de Blasi and Myjak showed that in the case of Hilbert spaces, it is not Banach's fixed point theorem which is behind 
their result. More precisely, they showed that the set of strict contractions, that is, the set of nonexpansive mappings 
$f \colon C\to C$ for which there is an $L<1$ satisfying
\[
  \rho(f(x),f(y)) \leq L \rho(x,y)
\]
for all $x,y\in C$ is not only of the first Baire category, but even $\sigma$-porous. In~\cite{BD2016} this 
result was generalised to general Banach spaces and in~\cite{BDR2017} to a class of metric spaces.

In~\cite{Rakotch}, E.~Rakotch proved a generalisation of Banach's fixed point theorem. More precisely, 
in this more general version the Lipschitz constant is replaced by a decreasing function. 
A mapping $f\colon C\to C$ is called \emph{contractive in the sense of Rakotch} if there is a decreasing function 
$\phi_f\colon[0,\mathrm{diam}(C)]\to [0,1]$ with $\phi_f(t)<1$ for $t>0$ and such that
\[
  \rho(f(x),f(y)) \leq \phi_f(\rho(x,y)) \rho(x,y)
\]
for all $x,y\in C$. In~\cite{RZ2001} the fourth author together with A.~J.~Zaslavski exhibit a set of nonexpansive mappings 
which are all contractive in the sense of Raktoch and the complement of which is $\sigma$-porous. In some sense, this can 
be seen as an explanation of the results of de Blasi and Myjak in terms of fixed point theorems. 
In~\cite{RZ2016:Two} this result was generalised to closed star-shaped subsets of hyperbolic spaces. The class of 
hyperbolic spaces was introduced in~\cite{RZ1990} and it contains those metric spaces with a big enough family of metric 
lines where the resulting triangles are thinner than in Euclidean space.

With the aim of developing a curvature theory for spaces without differentiability structure  
M.~Gromov introduced in~\cite{Gromov1987} the concept of $\mathrm{CAT}(\kappa)$-spaces. 
Roughly speaking, a $\mathrm{CAT}(\kappa)$-space is a geodesic metric space where the triangles are thinner than the ones 
in a corresponding model space. These model spaces are either Euclidean space or a suitably scaled version of the sphere 
or of the classical 2-dimensional hyperbolic space in the sense of non-Euclidean geometry. 
For more details on $\mathrm{CAT}(\kappa)$-spaces, we refer the reader to~\cite{BH1999, Burago}. 
In~\cite{BDR2017} the concept of weakly hyperbolic spaces was introduced. 
The class of weakly hyperbolic spaces contains the class of hyperbolic spaces 
and the class of all $\mathrm{CAT}(\kappa)$-spaces for all $\kappa\in\mathbb{R}$. 
In~\cite{BDR2017} it is shown that the typical nonexpansive mapping on a closed and star-shaped subset 
of a weakly hyperbolic space has the maximal possible Lipschitz constant one. 
Combining this result with those in~\cite{RZ2016:Two} we see that, in the case of hyperbolic spaces, 
the typical nonexpansive mapping is contractive in the sense of Rakotch, but nevertheless has Lipschitz constant one. 
One could interpret this statement in the sense that, in the case of non-positive curvature, 
on large scales the typical nonexpansive mapping behaves like a contraction but on small scales it approximates an isometry. 
Note that the result on the Lipschitz constant of the typical nonexpansive self-mapping is also known  
in the case of positive curvature, whereas the question of whether it is contractive in the sense of Rakotch has not yet been resolved. 
The aim of the present note is to address this question for the class of $\mathrm{CAT}(\kappa)$ with $\kappa >0$.

For recent results in metric fixed point theory on spaces with positive curvature, 
we refer the reader to~\cite{MR2508878, ezawa2020convergence, MR2847429}.

Since in the case of positive curvature, we need a bound on the diameter of the convex set, 
we only consider spaces of nonexpansive mappings on bounded, closed and convex subsets 
of $\mathrm{CAT}(\kappa)$-spaces.

\section{Preliminaries and notation}
Recall that a metric space $(X,\rho)$ is called \emph{geodesic} if for every pair of points $x,y\in X$, there is an 
isometric embedding $c\colon [0,\rho(x,y)]\to X$ with $c(0)=x$ and $c(\rho(x,y))=y$. We call the image of this interval 
a \emph{metric segment joining $x$ and $y$} and denote it by $[x,y]$. Note, however, that in general the above metric 
segment might not be unique so, in general, the notation $[x,y]$ is not well defined. Given $x,y\in X$ with a unique metric 
segment $[x,y]$, for $\lambda\in[0,1]$, we may define $(1-\lambda)x\oplus \lambda y$ as the unique point $z$ 
on this segment with
\[
  \rho(x,z) = \lambda \rho(x,y)\qquad\text{and}\qquad \rho(y,z) = (1-\lambda) \rho(x,y).
\]
An isometric image of the whole real line $\mathbb{R}$ is referred to as a metric line. We call a subset $C\subset X$
\emph{(metrically) convex} if for every pair of points $x,y\in C$ it contains a metric segment connecting them. 

Next, we recall the definition of $\mathrm{CAT}(\kappa)$-spaces from~\cite{BH1999}; see~Definition~2.6 
in~\cite[p.~96]{BDR2017}.

\begin{definition}
  \begin{enumerate}
  \item We define a family of model spaces $(M_{\kappa})$, where $\kappa\in\mathbb{R}$, as follows:
    \begin{enumerate}
    \item For $\kappa>0$ we let $M_{\kappa}$ denote the metric space given by the two-dimensional sphere $\mathbb{S}^{2}$ with its intrinsic metric, scaled by a factor of $1/\sqrt{\kappa}$.
    \item We define $M_{0}$ as the Euclidean space $\mathbb{R}^{2}$.
    \item For $\kappa<0$ we write $M_{\kappa}$ for the hyperbolic space 
     $\mathbb{H}^{2}$ (see~\cite[Definition~2.10]{BH1999}) with the metric scaled by a factor of $1/\sqrt{-\kappa}$. 
    \end{enumerate}
    We write $d_{\kappa}$ for the metric on $M_{\kappa}$.
  \item Let $\kappa\in \mathbb{R}$ and $(X,\rho_{X})$ be a metric space. 
    Given three points $x_{1},x_{2},x_{3}\in X$ and metric segments of the form 
    $[x_{1},x_{2}],[x_{2},x_{3}],[x_{3},x_{1}]\subseteq X$, we call the union 
    $[x_{1},x_{2}]\cup[x_{2},x_{3}]\cup[x_{3},x_{1}]$ a \emph{geodesic triangle} with vertices $x_{1},x_{2},x_{3}$.
    A geodesic triangle with vertices $\overline{x}_{1},\overline{x}_{2}, \overline{x}_{3}$ in $M_{\kappa}$ 
    is said to be a \emph{comparison triangle} for a geodesic triangle with vertices $x_{1},x_{2},x_{3}$ in $X$ if 
    $d_{\kappa}(\overline{x}_{i},\overline{x}_{j})=\rho_{X}(x_{i},x_{j})$. 
    A point $\overline{x}\in [\overline{x}_{i},\overline{x}_{j}]$ is called a \emph{comparison point} 
    for $x\in[x_{i},x_{j}]$ if $d_{\kappa}(\overline{x},\overline{x}_{k})=\rho_{X}(x,x_{k})$ for $k=i,j$.
    \item Let $(X,\rho_{X})$ be a metric space. If $\kappa\leq 0$, then $(X,\rho_{X})$ is called a 
    $\mathrm{CAT}(\kappa)$-space if it is geodesic and every geodesic triangle $T$ in $X$ has a comparison triangle 
    $\overline{T}$ in $M_{\kappa}$ such that
    \begin{equation}\label{eq:comparisontriangle}
      \rho_{X}(x,y)\leq d_{\kappa}(\overline{x},\overline{y})
    \end{equation}
    whenever $\overline{x},\overline{y}\in\overline{T}$ are comparison points for $x,y\in T$. 
If $\kappa>0$, then we define a constant $D_{\kappa}=\mathrm{diam} M_{\kappa}=\frac{\pi}{\sqrt{\kappa}}$ 
and we say that $(X,\rho_{X})$ is a $\mathrm{CAT}(\kappa)$ space if for every pair of points 
$x,y\in X$ with $\rho_X(x,y)< D_\kappa$ there is a metric segment joining $x$ and $y$  and~\eqref{eq:comparisontriangle} 
is satisfied for all geodesic triangles $T\subseteq X$ with perimeter smaller than $2D_\kappa$, that is, 
$\rho_X(x,y)+\rho_X(y,z)+\rho_X(z,x)< 2D_{\kappa}$, where $x,y,z$ denote the vertices of $T$.
  \end{enumerate}
\end{definition}
Thus, $\mathrm{CAT}(\kappa)$ spaces can be thought of as metric spaces for which 
every sufficiently small geodesic triangle is `thinner' in all directions than a corresponding triangle 
in the model space $M_{\kappa}$. The classes of $\mathrm{CAT}(\kappa)$ spaces are increasing in the sense 
that whenever $X$ is a $\mathrm{CAT}(\kappa)$ space, it is also a $\operatorname{CAT}(\kappa')$ space 
for all $\kappa'\geq \kappa$; see~\cite[Theorem~1.12]{BH1999}.

Finally, we recall the definition of $\sigma$-porous sets.
\begin{definition}
  Given a metric space $(M,d)$, a subset $A\subset M$ is called \emph{porous at a point $x\in A$} 
if there exist $\varepsilon_0>0$ and $\alpha>0$ such that for all $\varepsilon\in(0,\varepsilon_0)$, 
there exists a point $y\in B(x,\varepsilon)$ such that $B(y,\alpha\varepsilon)\cap A = \emptyset$. 
The set $A$ is called \emph{porous} if it is porous at all its points and $A$ is called \emph{$\sigma$-porous} 
if it is the countable union of porous sets.
\end{definition}
Note that the definition of porous sets differs slightly from the one used in~\cite{BM1989}. 
For $\sigma$-porous sets both definitions agree. A detailed discussion of these differences 
and why for $\sigma$-porosity they do not matter can be found in~\cite[p.~93]{BDR2017}.

\section{The typical nonexpansive self-mapping on a small enough set is a Rakotch contraction}

Let $(X,\rho)$ be a $\mathrm{CAT}(\kappa)$ space and let $C\subset X$ be a bounded and closed subset. 
We consider the space
\begin{equation}\label{eq:space_non_exp_mappings}
  \mathcal{M} := \{f\colon C\to C\colon \lip f \leq 1\}
\end{equation}
equipped with the metric
\[
  d_{\infty}(f,g) := \sup\{\rho(f(x),g(x))\colon x\in C\}.
\]
We show that on small enough closed and suitably ``star-shaped'' subsets of a $\mathrm{CAT}(\kappa)$ space the typical nonexpansive 
self-mapping is a Rakotch contraction. In other words, we intend to show the following theorem:

\begin{theorem}\label{thm:typicalRakotch}
  Let $(X,\rho_{X})$ be a $\mathrm{CAT}(\kappa)$ space with $\kappa\in\mathbb{R}$,
  \begin{equation*}
    D_{\kappa}:=\begin{cases}
      \infty & \text{ if }\kappa\leq 0,\\
      \frac{\pi}{\sqrt{\kappa}} & \text{ if }\kappa >0,
    \end{cases}
  \end{equation*} 
  $C\subset X$ a bounded closed set, and $x\in C$ such that $C\subset B(x,D_{\kappa}/2)$ and for every $z\in C$ the metric segement $[z,x]$ connecting $z$ to $x$ is contained in $C$. Let $\mathcal{M}$ denote the space of nonexpansive mappings on $C$ defined by \eqref{eq:space_non_exp_mappings}. Then the set of mappings which are not contractive in the sense of Rakotch is a $\sigma$-porous subset of~$\mathcal{M}$.
\end{theorem}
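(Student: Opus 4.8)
The plan is to follow the de~Blasi--Myjak / Reich--Zaslavski scheme: approximate an arbitrary $f\in\mathcal M$ by maps that are \emph{quantitatively} contractive on pairs of points bounded away from each other, and show each such approximant is surrounded by a whole ball of maps with the same property. It is convenient to front-load the target structure. For $f\in\mathcal M$ and $s\in(0,\operatorname{diam}(C)]$ set
\[
  q_s(f):=\sup\Bigl\{\tfrac{\rho(f(z),f(w))}{\rho(z,w)}:z,w\in C,\ \rho(z,w)\ge s\Bigr\}\in[0,1].
\]
A nonexpansive map is a Rakotch contraction precisely when $q_s(f)<1$ for every $s>0$: one direction is immediate from monotonicity of the modulus, and for the other the nonincreasing function $\phi_f:=q_{(\cdot)}(f)$ (extended by $\phi_f(0)=1$) serves as a Rakotch modulus. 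Hence the set of maps that are \emph{not} Rakotch contractions is $A=\bigcup_n A_n$ with $A_n:=\{f\in\mathcal M:q_{1/n}(f)=1\}$ (only $n$ with $1/n\le\operatorname{diam}(C)$ matter), and it suffices to prove each $A_n$ is porous. Throughout I write $r_0:=\sup_{z\in C}\rho(x,z)$; the estimates below require $r_0<D_\kappa/2$, which is the quantitative content of ``small enough'' and the only place positive curvature is felt. The approximants are the contractions towards the star centre, $f_t(z):=(1-t)\,f(z)\oplus t\,x$ for $t\in[0,1]$; since $\rho(f(z),x)<D_\kappa$ the segment $[f(z),x]$ is unique, so $f_t$ is well defined, and by hypothesis $[f(z),x]\subseteq C$, whence $f_t\colon C\to C$.

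The heart of the matter is a geometric lemma: for each $s>0$ there is $c=c(r_0,\kappa,s)\in(0,1]$ such that for all $a,b\in\overline{B(x,r_0)}$ with $\rho(a,b)\ge s$ and all $t\in(0,1]$,
\[
  \rho\bigl((1-t)a\oplus t x,\ (1-t)b\oplus t x\bigr)\le(1-ct)\,\rho(a,b),
\]
while the left-hand side is in any case $\le\rho(a,b)$. To prove it one takes a comparison triangle $\overline a,\overline b,\overline x$ in $M_\kappa$ for $a,b,x$; the contracted points are comparison points for the corresponding points on $[a,x]$ and $[b,x]$, so by~\eqref{eq:comparisontriangle} it suffices to prove the inequality in $M_\kappa$. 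For $\kappa\le0$ convexity of the metric gives the factor $(1-t)$ outright (so $c=1$). For $\kappa>0$ one writes the distance of the contracted pair via the spherical law of cosines and compares it with $(1-t)\rho(a,b)$: because $a,b$ lie in a ball of radius $r_0<D_\kappa/2=\tfrac{\pi}{2}R$ with $R=1/\sqrt\kappa$, both the radial contraction rate (exactly $t$) and the tangential rate (of order $t\,u\cot u$ with $u=\rho(\cdot,x)/R\le r_0/R<\pi/2$) are bounded below by a positive multiple of $t$ uniformly, and the constraint $\rho(a,b)\ge s$ excludes the infinitesimal tangential directions near $\partial B(x,D_\kappa/2)$ where the rate would degenerate.

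Two elementary facts complete the picture. First, $f_t$ approximates $f$: for each $z$, $\rho(f(z),f_t(z))=t\,\rho(f(z),x)\le t\operatorname{diam}(C)$, so $d_\infty(f,f_t)\le t\operatorname{diam}(C)$. Second, being strongly contractive on $s$-separated pairs is stable: from $\rho(h(z),h(w))\le\rho(g(z),g(w))+2d_\infty(g,h)$ one gets $q_s(h)\le q_s(g)+2d_\infty(g,h)/s$. Now fix $n$, let $c:=c(r_0,\kappa,\tfrac1{2n})$, and put $\alpha:=c/(8n\operatorname{diam}(C))$. Given $f\in\mathcal M$ and $\varepsilon\in(0,\operatorname{diam}(C))$, set $t:=\varepsilon/(2\operatorname{diam}(C))\in(0,\tfrac12)$ and $g:=f_t$, so $d_\infty(f,g)\le\varepsilon/2<\varepsilon$. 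I claim $q_{1/n}(g)\le 1-ct$: if $\rho(z,w)\ge1/n$ then either $\rho(f(z),f(w))\le(1-ct)\rho(z,w)$, whence $\rho(g(z),g(w))\le\rho(f(z),f(w))\le(1-ct)\rho(z,w)$ by nonexpansiveness of the contraction; or $\rho(f(z),f(w))>(1-ct)\rho(z,w)\ge(1-ct)/n\ge1/(2n)$ (using $c\le1$, $t<\tfrac12$), and then the lemma applied to $a=f(z),b=f(w)$ with threshold $1/(2n)$ gives $\rho(g(z),g(w))\le(1-ct)\rho(f(z),f(w))\le(1-ct)\rho(z,w)$. By the stability estimate every $h\in B(g,\alpha\varepsilon)$ satisfies $q_{1/n}(h)\le q_{1/n}(g)+2n\alpha\varepsilon\le 1-ct+\tfrac{ct}{2}<1$, so $B(g,\alpha\varepsilon)\cap A_n=\emptyset$. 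Thus each $A_n$ is porous (indeed with the same $\alpha$ and $\varepsilon_0=\operatorname{diam}(C)$ at every point of $\mathcal M$), and $A$ is $\sigma$-porous.

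I expect the geometric lemma for $\kappa>0$ to be the main obstacle: one must produce a contraction factor that is linear in $t$ and bounded away from $1$ uniformly over all admissible configurations. In contrast to the CAT($0$) case, where convexity of the metric hands this over for free, in positive curvature the contraction towards $x$ degenerates to second order in $t$ as the points approach the boundary sphere $\partial B(x,D_\kappa/2)$; this is exactly why the strict bound $r_0<D_\kappa/2$ together with the restriction to $s$-separated pairs is indispensable, and carrying out the spherical trigonometry so as to extract a uniform lower bound on the relative contraction rate is the delicate step.
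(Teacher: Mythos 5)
Your proof skeleton coincides with the paper's: you approximate $f$ by post-composing it with the radial contraction towards the star centre $x$, note that contractivity at a fixed separation scale survives a $d_\infty$-perturbation proportional to that scale, conclude that each $A_n$ is porous, and recover the Rakotch property on $\bigcap_n(\mathcal M\setminus A_n)$. Up to normalisation of the scale, your sets $\mathcal M\setminus A_n$ are precisely the paper's sets $\mathcal K_n$, your stability estimate $q_s(h)\le q_s(g)+2d_\infty(g,h)/s$ and the ensuing bookkeeping are correct, and your characterisation of Rakotch contractions via $q_s(f)<1$ for all $s>0$ is sound.

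The genuine gap is the ``geometric lemma'', which is the only point where positive curvature enters and which you assert but do not prove --- as you yourself concede (``carrying out the spherical trigonometry \dots is the delicate step''). Your sketch estimates the radial rate (exactly $t$) and the tangential rate (of order $t\,u\cot u$), but these are infinitesimal rates along two special families of directions; a general pair $a,b\in\overline{B(x,r_0)}$ is neither radially nor tangentially aligned, and deducing $\rho\bigl((1-t)a\oplus tx,(1-t)b\oplus tx\bigr)\le(1-ct)\,\rho(a,b)$ for \emph{all} pairs from such directional information requires an actual argument. That argument is the core of the paper's proof: on the sphere, the law of haversines combined with the convexity of $\tau\mapsto\sin^k\tau$ on $[0,R]$, $R<\pi/2$ (Lemmas \ref{lem:Contr} and \ref{lem:PowSinConv}), gives $\lip s_{x,R,t}\le t^{1/k_R}$, and this transfers to an arbitrary $\mathrm{CAT}(\kappa)$ space via comparison triangles (Lemma \ref{lem:Contr2}); since $(1-t)^{1/k}\le 1-t/k$, this yields exactly your lemma with $c=1/k$. (Alternatively, your heuristic could be made rigorous by bounding the operator norm of the differential of the radial contraction by $1-(u\cot u)\,t$ and using geodesic convexity of balls of radius less than $\pi/2$ to pass from local to global Lipschitz bounds; but some such computation must be carried out.) One further inaccuracy: the separation hypothesis $\rho(a,b)\ge s$ is \emph{not} needed in the geometric lemma, contrary to your claim that it is indispensable there. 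Since $r_0<D_\kappa/2$ strictly, the quantity $u\cot u$ is bounded below by $\sqrt{\kappa}\,r_0\cot(\sqrt{\kappa}\,r_0)>0$ over the whole ball, so the constant $c$ can be taken to depend only on $r_0$ and $\kappa$, uniformly over all pairs --- this is precisely what the paper proves; the degeneration you describe occurs only in the limit $r_0\to D_\kappa/2$. Separation is needed only where you in fact use it correctly, namely in the perturbation-stability step.
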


\begin{remark}
  Combining the above theorem with Theorem~3.3 of~\cite[p.~101]{BDR2017}, we obtain that under the above assumptions on $C$, the set of nonexpansive self-mappings of~$C$ which are contractive in the sense of Rakotch and have Lipschitz constant one is the complement of a $\sigma$-porous set. In other words, the typical nonexpansive self-mapping is both contractive in the sense of Rakotch and has the maximal possible Lipschitz constant one.
\end{remark}
\subsection*{Proof of Theorem~\ref{thm:typicalRakotch}.}
Roughly speaking, the strucutre of the proof of Theorem~\ref{thm:typicalRakotch} can be described as follows: We start by establishing some tools on the two-dimensional sphere~$\mathbb{S}^{2}$. Using these tools, we show that the set of strict contractions is a dense subset of~$\mathcal{M}$. We pick a null sequence $(a_n)$ by setting $a_n=R/n$. Then using the density of the strict contractions, given $f\in\mathcal{M}$, we can find a strict contraction $f_\gamma$ arbitrarily close. Then we show that all mappings $g\in\mathcal{M}$ which are close to $f_\gamma$, on distances larger than $a_n$ behave like strict contractions. Since $a_n\to 0$, in the last step we may conclude that the typical $f\in\mathcal{M}$ is a Rakotch contraction.

As outlined above, we first consider the case of the sphere $\mathbb{S}^{2}$ in the three-dimensional Euclidean space. In this case we have $D_\kappa = \pi$ and hence we let $0<R<\frac{\pi}{2}$ and $x\in\mathbb{S}^{2}$ be given. We denote by $\rho_{\mathbb{S}^{2}}$ the intrinsic metric on the sphere and set
\[
  B(x,R):=\{y\in\mathbb{S}^{2}\colon \rho_{\mathbb{S}^{2}}(x,y) < R\}.
\]
Now for $t\in [0,1]$, we define the mapping
\[
  s_{x,R,t}\colon B(x,R)\to B(x,R), \qquad z \mapsto t z \oplus (1-t) x
\]
and note that
\[
  \rho_{\mathbb{S}^{2}}(x,s_{x,R,t}(y)) = t \rho_{\mathbb{S}^{2}}(x,y) < tR
\]
and
\[
  \rho_{\mathbb{S}^{2}}(y,s_{x,R,t}(y)) = (1-t)  \rho_{\mathbb{S}^{2}}(x,y) < (1-t)R.
\]
In order to show that Banach contractions form a dense subset of~$\mathcal{M}$, we first investigate the Lipschitz constant 
of~$s_{x,R,t}$.

\begin{lemma}\label{lem:Contr}
  Let $t\in [0,1)$ be given. The function $s_{x,R,t}$ is a strict contraction. More precisely, we have
  \[
    \lip s_t\leq t^{1/k_R}
  \]
  for some $k_R\geq 2$ which only depends on $R$.
\end{lemma}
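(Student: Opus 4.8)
The plan is to reduce the computation of $\lip s_t$ to a one-dimensional inequality by passing to geodesic polar coordinates on $\mathbb{S}^2$ centred at $x$. In these coordinates a point is described by its distance $r\in[0,R)$ to $x$ and an angular coordinate $\phi$, and the intrinsic metric takes the form $dr^2+\sin^2 r\,d\phi^2$. The definition of $s_t$ shows that in these coordinates it is simply the radial scaling $(r,\phi)\mapsto(tr,\phi)$. Hence, with respect to the orthonormal frames $\{\partial_r,(\sin r)^{-1}\partial_\phi\}$ at a point with radial coordinate $r$ and $\{\partial_r,(\sin tr)^{-1}\partial_\phi\}$ at its image, the differential of $s_t$ is diagonal with singular values $t$ in the radial and $\sin(tr)/\sin r$ in the angular direction. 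Since the ball $B(x,R)$ with $R<\pi/2$ is geodesically convex and $s_t$ is smooth, the Lipschitz constant of $s_t$ equals the supremum over $B(x,R)$ of the operator norm of its differential, so that
\[
  \lip s_t = \sup_{0<r<R}\max\Bigl\{t,\ \frac{\sin(tr)}{\sin r}\Bigr\}.
\]

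First I would check that the angular singular value always dominates: the inequality $\sin(tr)\ge t\sin r$ is equivalent to $\sin(tr)/(tr)\ge (\sin r)/r$, which holds because $u\mapsto (\sin u)/u$ is decreasing on $(0,\pi)$ and $tr\le r$. I would then show that $r\mapsto\sin(tr)/\sin r$ is increasing on $(0,\pi/2)$; differentiating its logarithm reduces this to $t\cot(tr)\ge\cot r$, which follows from the fact that $a\mapsto a\cot(ar)$ is decreasing on $(0,1]$, its derivative having the sign of $\tfrac12\sin(2ar)-ar<0$. Consequently the supremum above is attained in the limit $r\to R$ and
\[
  \lip s_t = \frac{\sin(tR)}{\sin R}.
\]

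It then remains to produce $k_R\ge 2$, depending only on $R$, with $\bigl(\sin(tR)/\sin R\bigr)^{k_R}\le t$ for all $t\in(0,1)$; equivalently, that the quantity
\[
  \Phi(t):=\frac{\log(1/t)}{\log\bigl(\sin R/\sin(tR)\bigr)}
\]
is bounded above on $(0,1)$ by $k_R$. Here I would compute the two boundary limits: as $t\to 0^+$ one gets $\Phi(t)\to 1$, using $\sin(tR)\sim tR$, while as $t\to 1^-$ a first-order expansion gives $\Phi(t)\to(\tan R)/R$. Since $\Phi$ is continuous on $(0,1)$ with finite one-sided limits at both endpoints, it attains a finite maximum $M_R$, and I would set $k_R:=\max\{2,\lceil M_R\rceil\}$, which depends only on $R$.

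The steps identifying $\lip s_t=\sin(tR)/\sin R$ are essentially forced once the polar-coordinate picture is in place; the genuine work, and the main obstacle, is the final estimate, that is, verifying that $\Phi$ stays bounded uniformly in $t$. The delicate regime is $t\to 1^-$, where both numerator and denominator of $\Phi$ vanish and the limiting value $(\tan R)/R$ is exactly what forces $k_R$ to blow up as $R\to\pi/2$; this also explains why the constant must be allowed to depend on $R$. The lower bound $k_R\ge 2$ is harmless, since it is imposed by hand and only weakens the resulting contraction factor $t^{1/k_R}$.
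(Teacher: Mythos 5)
Your proof is correct, but it takes a genuinely different route from the paper's. The paper stays entirely within spherical trigonometry: by the law of haversines, $\hav c = \hav(a-b) + \sin a \sin b \hav\gamma$, so the squared ``distance'' splits into a radial part and an angular part; under $s_t$ the radial part contracts by a factor $t$ and the angular part by $t^{2/k}$, the latter via an auxiliary lemma (Lemma~\ref{lem:PowSinConv}) asserting that $\tau\mapsto\sin^k\tau$ is convex on $[0,R]$ for any integer $k\geq 1+\tan^{2}R$, and the two bounds are recombined using the concavity of $\xi\mapsto\sqrt{\hav\xi}$. Your argument is instead differential-geometric: you identify $s_t$ as the radial scaling in geodesic polar coordinates, read off the singular values $t$ and $\sin(tr)/\sin r$ of its differential, and use the geodesic convexity of $B(x,R)$ (valid since $R<\pi/2$) to pass from the pointwise operator-norm bound to the global Lipschitz bound; your monotonicity computations and the continuity--compactness argument for $\Phi$ are sound, and your limits $\Phi(t)\to 1$ and $\Phi(t)\to(\tan R)/R$ are correct. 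What your route buys is sharpness and insight: you obtain the exact value $\lip s_t=\sin(tR)/\sin R$, the optimal admissible exponent $\sup\Phi$, and a transparent explanation of why $k_R$ must blow up as $R\to\pi/2$. What the paper's route buys is elementarity and explicitness: it uses no smooth structure, does not need convexity of the ball (the haversine identity applies to any spherical triangle), avoids any limiting argument by exhibiting the explicit constant $k\geq 1+\tan^{2}R$, and isolates in Lemma~\ref{lem:PowSinConv} precisely the one-variable inequality that is reused in the main proof; both arguments feed into the $\mathrm{CAT}(\kappa)$ comparison step of Lemma~\ref{lem:Contr2} in the same way, since only the spherical case is required there.
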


\begin{proof}
  Given two points $y,z\in B(x,R)$, we use the notation
  \[
    a := \rho_{\mathbb{S}^{2}}(x,y), \qquad b := \rho_{\mathbb{S}^{2}}(x,z) \qquad\text{and}\qquad c := \rho_{\mathbb{S}^{2}}(y,z).
  \]
  In order to abbreviate the notation, we write $s_t$ for the function $s_{x,R,t}$. Moreover, we set
  \[
    c_t := \rho_{\mathbb{S}^{2}}(s_t(y),s_t(z))
  \]
  and observe that
  \[
    \rho_{\mathbb{S}^{2}}(x, s_t(y)) = ta, \qquad \text{and}\qquad \rho_{\mathbb{S}^{2}}(x,s_t(z)) = tb.
  \]
  Using this notation, our aim is to show that
  \[
    c_t \leq t^{1/k_R} c.
  \]  
  Without loss of generality, we may assume that $a\geq b > 0$. The law of haversines allows us to write
  \[
    \hav c = \hav(a-b) + \sin a\sin b \hav \gamma,
  \]
  where $\gamma$ is the angle of the triangle spanned by $x,y, z$ at the vertex $x$. 
  Again using the law of haversines, we obtain
  \begin{align*}
    \hav c_t & = \hav(t(a-b)) + \sin(ta)\sin(tb) \hav(\gamma) \leq t \hav(a-b) + t^{2/k} \sin a \sin b \hav \gamma \\
             & \leq t^{2/k} (\hav (a-b) + \sin a \sin b \hav \gamma) = t^{2/k} \hav c
  \end{align*}
  for some $k\geq 2$ by Lemma~\ref{lem:PowSinConv}, taking into account that $\hav \vartheta = \sin^2\frac{\vartheta}{2}$ and 
$a-b\in[0,\pi/2)$. Using the concavity of the function $\xi \mapsto \sqrt{\hav \xi}$ on $[0,\pi]$, we may conclude from the above 
that
  \[
    \hav c_t \leq t^{2/k} \hav c\leq \hav (t^{1/k} c)
  \]
  and hence
  \[
    \rho(s_t(y),s_t(z)) = c_t \leq t^{1/k} c = t^{1/k} \rho(y,z),
  \]
  that is, $s_t$ is $t^{1/k}$-Lipschitz. In particular, $s_t$ is a strict contraction.
\end{proof}

\begin{lemma}\label{lem:PowSinConv}
  Let $R\in[0,\pi/2)$. Then there is an integer $k=k_R\in\mathbb{N}$ such that the function
  \[
    [0,R]\to[0,1], \qquad \tau \mapsto \sin^{k} \tau,
  \]
  is convex. In particular, for this $k$ we have
  \[
    \sin(d t) \leq t^{1/k} \sin d
  \]
  for every $d\in [0,R]$ and every $t\in[0,1]$
\end{lemma}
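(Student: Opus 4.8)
The plan is to obtain the convexity by a direct second-derivative computation, and then to read off the pointwise inequality as an immediate consequence of convexity together with the vanishing of $\sin^k$ at the origin.

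First I would write $g(\tau) = \sin^k\tau$ and compute
\[
  g''(\tau) = k\sin^{k-2}\tau\,\bigl((k-1)\cos^2\tau - \sin^2\tau\bigr).
\]
On the interval $[0,R]$ with $R < \pi/2$ the prefactor $k\sin^{k-2}\tau$ is nonnegative, so $g$ is convex exactly when $(k-1)\cos^2\tau \geq \sin^2\tau$ holds throughout $[0,R]$, that is, when $k \geq 1 + \tan^2\tau = \sec^2\tau$. Since $\tan^2$ is increasing on $[0,R]$ and $R$ stays strictly below $\pi/2$, the quantity $\sec^2 R$ is finite; hence it suffices to fix any integer $k = k_R \geq \max\{2,\sec^2 R\}$. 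For such $k$ the bracketed factor is nonnegative on all of $[0,R]$, the choice $k \geq 2$ guaranteeing that the displayed second-derivative formula is valid and that $\sin^{k-2}\tau$ causes no difficulty at $\tau = 0$.

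For the ``in particular'' clause I would simply exploit this convexity. Fixing $d \in [0,R]$ and $t \in [0,1]$, I note that $td \in [0,d] \subseteq [0,R]$, so that writing $td = t\cdot d + (1-t)\cdot 0$ and using convexity of $g$ together with $g(0) = 0$ gives
\[
  \sin^k(td) = g(td) \leq t\,g(d) + (1-t)\,g(0) = t\sin^k d.
\]
Taking $k$-th roots of both nonnegative sides then yields $\sin(td) \leq t^{1/k}\sin d$, which is the desired estimate.

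I do not expect any serious obstacle here: the entire argument reduces to the sign analysis of $g''$. The only point demanding care is the selection of $k$, where one must observe that the constraint $k \geq \sec^2\tau$ is tightest at the right endpoint $\tau = R$ and that finiteness of $\sec^2 R$ relies precisely on the hypothesis $R < \pi/2$; as $R$ approaches $\pi/2$ one would be forced to let $k \to \infty$.
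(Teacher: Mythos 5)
Your proposal is correct and follows essentially the same route as the paper: a sign analysis of the second derivative of $\tau \mapsto \sin^k\tau$, leading to the same threshold $k \geq 1 + \tan^2 R = \sec^2 R$ (the paper writes this as $k \geq 1 + 1/\cot^2 R$). Your explicit derivation of the ``in particular'' inequality from convexity and $\sin^k 0 = 0$ is a welcome addition, since the paper leaves that step implicit.
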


\begin{proof}
  We consider the second derivative of this function and note that
  \[
    k \sin^k(\tau) ((k - 1) \cot^2(\tau) - 1) \geq 0
  \]
  provided that  
  \[
    k \geq 1 + \frac{1}{\cot^2 R} \geq 1 + \frac{1}{\cot^2 \tau}.
  \]
  This is because the cotangent function is decreasing on $(0,R)$.
\end{proof}

Now that on the sphere we have all tools available, we return to the $\mathrm{CAT}(\kappa)$ space~$(X, \rho_X)$.

\begin{lemma}\label{lem:Contr2}
  Let $(X,\rho_X)$ be a $\mathrm{CAT}(\kappa)$ space for some $\kappa\in\mathbb{R}$. Moreover, let $x\in X$ and $R>0$. In the 
case where $\kappa>0$, we assume that $R<\frac{D_\kappa}{2} = \frac{\pi}{2\sqrt{\kappa}}$. For $t\in [0,1]$, the mapping
  \[
    s_{X,x,R,t}\colon B(x,R)\to B(x,R), \qquad z \mapsto t z \oplus (1-t) x
  \]
  is a strict contraction with $\lip s_{X,x,R,t} \leq t^{1/{k_{\kappa,R}}}$ for some $k_{\kappa,R}>0$ which only depends on 
$\kappa$ and $R$.
\end{lemma}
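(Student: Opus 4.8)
The plan is to reduce the estimate to the corresponding statement in the model space $M_\kappa$ by means of the comparison inequality \eqref{eq:comparisontriangle}, and then to invoke Lemma~\ref{lem:Contr}. It is convenient to treat the cases $\kappa>0$ and $\kappa\le 0$ separately, the former being the substantial one.

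Fix $y,z\in B(x,R)$ and abbreviate $s_t:=s_{X,x,R,t}$. First I would check that the geodesic triangle $T$ with vertices $x,y,z$ admits a comparison triangle in $M_\kappa$. Since $\rho_X(x,y),\rho_X(x,z)<R<D_\kappa$, the segments $[x,y]$ and $[x,z]$ exist, and the estimate $\rho_X(y,z)\le \rho_X(x,y)+\rho_X(x,z)<2R<D_\kappa$ guarantees the existence of $[y,z]$; moreover the perimeter of $T$ is bounded by $4R<2D_\kappa$ since $R<D_\kappa/2$. Hence \eqref{eq:comparisontriangle} is available for $T$. Next I would identify the comparison points of $s_t(y)$ and $s_t(z)$. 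By definition $s_t(y)\in[x,y]$ satisfies $\rho_X(x,s_t(y))=t\rho_X(x,y)$, so its comparison point $\overline{p}\in[\overline{x},\overline{y}]$ is the point with $d_\kappa(\overline{x},\overline{p})=t\,d_\kappa(\overline{x},\overline{y})$, that is $\overline{p}=t\overline{y}\oplus(1-t)\overline{x}$; similarly the comparison point of $s_t(z)$ is $\overline{q}=t\overline{z}\oplus(1-t)\overline{x}$. The comparison inequality \eqref{eq:comparisontriangle} then yields $\rho_X(s_t(y),s_t(z))\le d_\kappa(\overline{p},\overline{q})$, so it remains to estimate $d_\kappa(\overline{p},\overline{q})$ in the model space.

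For $\kappa>0$ the space $M_\kappa$ is the unit sphere $\mathbb{S}^{2}$ with its metric scaled by $1/\sqrt{\kappa}$, and $\overline{p},\overline{q}$ are precisely the values at $\overline{y},\overline{z}$ of the map $w\mapsto tw\oplus(1-t)\overline{x}$. Because $d_\kappa(\overline{x},\overline{y}),d_\kappa(\overline{x},\overline{z})<R$, the corresponding distances on the unit sphere are smaller than $\sqrt{\kappa}\,R<\pi/2$, so Lemma~\ref{lem:Contr} applies on $\mathbb{S}^{2}$ with radius $\sqrt{\kappa}\,R$; since the Lipschitz constant of a map is invariant under rescaling the metric and $\oplus$ is defined through scale-invariant distance ratios, the same bound persists in $M_\kappa$. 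This gives $d_\kappa(\overline{p},\overline{q})\le t^{1/k}\,d_\kappa(\overline{y},\overline{z})=t^{1/k}\rho_X(y,z)$ with $k=k_{\sqrt{\kappa}R}$ depending only on $\kappa$ and $R$, and combining this with the comparison inequality finishes the case with $k_{\kappa,R}:=k_{\sqrt{\kappa}R}$. For $\kappa\le 0$ the space $X$ is in particular a $\mathrm{CAT}(0)$ space, in which the distance between two geodesics emanating from a common point is a convex function of the parameter; consequently $s_t$ is already $t$-Lipschitz, and since $t\le t^{1/k}$ for every $t\in[0,1]$ and every $k\ge 1$, any choice such as $k_{\kappa,R}=1$ works.

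The one genuinely delicate point is the passage, in the case $\kappa>0$, from Lemma~\ref{lem:Contr} on the unit sphere to the scaled model space $M_\kappa$: one must ensure that rescaling the metric leaves the Lipschitz estimate of the geodesic contraction intact and that the rescaled radius $\sqrt{\kappa}\,R$ indeed stays below $\pi/2$, which is exactly what the hypothesis $R<D_\kappa/2$ provides. Everything else amounts to the routine verification of the triangle and perimeter conditions required to apply the comparison inequality.
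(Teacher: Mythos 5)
Your proof is correct and takes essentially the same route as the paper's: pass to a comparison triangle for $x,y,z$ in $M_\kappa$, observe that $t\overline{y}\oplus(1-t)\overline{x}$ and $t\overline{z}\oplus(1-t)\overline{x}$ are comparison points for $s_t(y)$ and $s_t(z)$, apply the $\mathrm{CAT}(\kappa)$ inequality together with Lemma~\ref{lem:Contr} on the rescaled sphere, and take $k_{\kappa,R}=1$ when $\kappa\le 0$. The only difference is that you spell out the perimeter condition, the identification of the comparison points, and the scale-invariance of the Lipschitz bound, which the paper leaves implicit.
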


\begin{proof}
  For $\kappa\leq 0$ we may obviously pick $k_{\kappa,R}=1$. Therefore we now consider the case where $\kappa>0$. Let $y,z\in B(x,R)$ 
be given. We consider the triangle with vertices $x,y,z$ and pick a comparison triangle in $M_\kappa$ with vertices $\bar{x},\bar{y},\bar{z}$. In other words, we have points on the unit sphere~$\mathbb{S}^{2}$ with the properties that
  \[
    \rho_{\mathbb{S}^{2}}(\bar{x},\bar{y}) = \sqrt{\kappa} \rho_{X}(x,y), \qquad \rho_{\mathbb{S}^{2}}(\bar{x},\bar{z}) = \sqrt{\kappa} \rho_{X}(x,z)
  \]
  and
  \[
    \rho_{\mathbb{S}^{2}}(\bar{y},\bar{z}) = \sqrt{\kappa} \rho_{X}(y,z).
  \]
  We set $s_t := s_{\bar{x},\sqrt{\kappa}R,t}$. Using the above identities, we may use Lemma~\ref{lem:Contr} to obtain a constant $k:=k_{\sqrt{\kappa}R}$ and conclude that
  \[
    \rho_{X}(s_{X,x,R,t}(y),s_{X,x,R,t}(z)) \leq \frac{\rho_{\mathbb{S}^{2}}(s_{t}(\bar{y}),s_t(\bar{z}))}{\sqrt{\kappa}} \leq t^{1/k}  \frac{\rho_{\mathbb{S}^{2}}(\bar{y},\bar{z})}{\sqrt{\kappa}} = t^{1/k} \rho_{X}(y,z)
  \]
  because $s_{t}(\bar{y})$ is a comparison point for $s_{X,x,R,t}(y)\in [x,y]$ and $s_{t}(\bar{z})$ is a comparison point for 
$s_{X,x,R,t}(y)\in [x,z]$.
\end{proof}

\begin{proof}[{of Theorem~\ref{thm:typicalRakotch}}]
  Since $C$ is bounded, closed and $C\subset B(x,D_\kappa/2)$ for some $x\in C$, we may pick $0<R<D_\kappa/2$ with $C\subset B(x,R)$.
  We set
  \[
    \mathcal{K}_{n} := \left\{f\in\mathcal{M}\colon \exists \beta\in(0,1)\;\text{s.t.}\; \rho(f(y),f(z)) \leq \beta \rho(y,z)\;\text{for}\;\rho(y,z)\geq \frac{R}{n}\right\}
  \]
  and show that $\mathcal{M}\setminus\mathcal{K}_n$ is porous. In order to keep the notation succinct, we use the 
abbreviations $s_t:=s_{X,x,R,t}$ and $k:=k_{\kappa, R}$, where the latter number is the reciprocal of the exponent from 
Lemma~\ref{lem:Contr2} corresponding to~$R$ and~$\kappa$.

  Let $f\in\mathcal{M}$ and $r\in(0,1]$. We set
  \[
    \gamma := \frac{r}{2(R+1)} \qquad\text{and}\qquad  \alpha := \frac{\min\{R,1\}(1-(1-\gamma)^{1/k})}{4n}, 
  \]
  and also $f_\gamma := s_{1-\gamma}\circ f$, that is, 
  \[
    f_\gamma (z)  = (1-\gamma) f(z) \oplus \gamma x
  \]
  for $z\in C$. Then
  \[
    d_\infty(f,f_\gamma) \leq \gamma R < r
  \]
  and
  \[
    \rho(f_\gamma(y),f_\gamma(z)) \leq (1-\gamma)^{1/k} \rho(y,z)
  \]
  for all $y,z\in C$ by Lemma~\ref{lem:Contr2}. Observe that for $y,z\in C$ with $\rho(y,z) \geq \frac{R}{n}$, we have
  \[
    \rho(y,z) - \rho(f_\gamma(y),f_\gamma(z)) \geq \rho(y,z) - (1-\gamma)^{1/k} \rho(y,z).
  \]
  Now let $g\in B(f_\gamma, \alpha r)$ and observe that
  \[
    \rho(g(y),g(z)) \leq \rho(f_\gamma(y),f_\gamma(z)) +2 \alpha r
  \]
  and
  \begin{align*}
    \rho(y,z)-\rho(g(y),g(z)) & \geq \rho(y,z) - \rho(f_\gamma(y),f_\gamma(z)) -2\alpha r \geq \\
                              & \geq (1-(1-\gamma)^{1/k})\rho(y,z) - 2\alpha r.
  \end{align*}
  Therefore
  \begin{align*}
    \rho(g(y), g(z)) & \leq \rho(y,z) - (1-(1-\gamma)^{1/k})\rho(y,z) + 2\alpha r\\
                     & \leq \rho(y,z) \left(1-(1-(1-\gamma)^{1/k})+\frac{2n\alpha r}{R}\right)\\
                     & \leq  \frac{1+(1-\gamma)^{1/k}}{2}\rho(y,z)
  \end{align*}
  and hence $g\in \mathcal{K}_{n}$. Moreover, using the inequality $d_\infty(f_\gamma, g)\leq \alpha r$, we see that 
  \[
    d_\infty(f,g) \leq d_\infty(f_\gamma,g) + d_\infty(f,f_\gamma) \leq \alpha r + \gamma R \leq \frac{r}{2}+\frac{r}{2}\leq r
  \]
  and hence
  \[
    \{g\in\mathcal{M}\colon d_\infty(g,f_\gamma)\leq\alpha r\}\subset \{g\in\mathcal{M}\colon d_\infty(g,f) \leq r\}.
  \]
  This shows that $\mathcal{M}\setminus\mathcal{K}_n$ is porous. Finally, observe that every mapping 
$g\in\bigcap_{n=1}^{\infty} \mathcal{K}_n$ is contractive in the sense of Rakotch.
\end{proof}
\section{An application}
Whilst Theorem~\ref{thm:typicalRakotch} establishes that the typical mapping $f\in\mathcal{M}$ in the given setting is a Rakotch contraction, it is known, in more general settings, that the typical mapping $f\in \mathcal{M}$ is not a strict contraction, that is, it has Lipschitz constant one; see \cite{BD2016} and \cite{BDR2017}. Therefore, for the typical mapping $f\in\mathcal{M}$, one can ask about the size of the set of points $x\in\mathcal{M}$ which `witness' the fact that $\lip f =1$. There are two natural means to make this question precise: For a mapping $f\in\mathcal{M}$ and point $x\in C$, we consider the quantities
\begin{equation*}
\lip(f,x)=\limsup_{r\searrow 0}\sup_{y\in (B(x,r)\cap C)\setminus\left\{x\right\}}\frac{\rho(f(y),f(x))}{\rho(y,x)}, 
\end{equation*}
and
\begin{equation*}
\widehat{\lip}(f,x):=\sup_{y\in C\setminus\left\{x\right\}}\frac{\rho(f(y),f(x))}{\rho(y,x)}.
\end{equation*}
Observe that $\lip(f,x) \leq \widehat{\lip}(f,x)\leq \lip f$. Moreover, it is easy to come up with examples where the two inequalities in this sequence are strict. Thus, the three compared quantities are, generally speaking, only loosely related. In general, we have the identity $\lip f=\sup_{x\in C}\widehat{\lip}(f,x)$ and in the case where $C$ is convex, we have $\lip f=\sup_{x\in C}\lip(f,x)$; see \cite[Lemma~3.4]{BDR2017}. 

For the typical $f\in\mathcal{M}$, the quantities defined above offer two means to make precise the notion that a point $x\in C$ witnesses the fact that $\lip f=1$. We can either regard $x$ as such a witness if $\lip(f,x)=1$ or we can regard $x$ as such a witness if $\widehat{\lip}(f,x)=1$. Thus, for a mapping $f\in\mathcal{M}$ we consider the following two sets:
\begin{equation*}
R(f):=\left\{x\in C\colon \lip(f,x)=1\right\}
\end{equation*}
and
\begin{equation*}
\widehat{R}(f):=\left\{x\in C\colon \widehat{\lip}(f,x)=1\right\},
\end{equation*}
as defined in \cite{BDR2017}. Since $\lip(f,x)\leq \widehat{\lip}(f,x)$, the latter notion of being a witness is weaker and we have $R(f)\subseteq \widehat{R}(f)$. 

It is easy to come up with examples of settings $X$, $C\subseteq X$ and $\mathcal{M}$, and mappings $f\in \mathcal{M}$ which have the maximal possible Lipschitz constant $\lip f=1$, but behave as a constant mapping on large open subsets of $C$ and therefore satisfy $\lip(f,x)=0$ for a large set of points $x\in C$. Such examples indicate that for mappings $f\in\mathcal{M}$ with $\lip f=1$, the set $R(f)$ need not be large in any sense.

However, when $X$ is a hyperbolic metric space and $C\subset X$ is star-shaped, the paper \cite[Corollary~3.9]{BDR2017} establishes that for the typical $f\in\mathcal{M}$, the set $R(f)$ is large in the sense of the Baire Category Theorem, that is, it is a residual subset of $C$. This result leaves open the case where $X$ is a $\mathrm{CAT}(\kappa)$ space with $\kappa>0$. As a consequence of Theorem~\ref{thm:typicalRakotch}, we are able to treat this case, provided that $C$ is sufficiently small. The proof exploits the following noteworthy property of Rakotch contractions observed without proof in \cite{BD2016}. We provide its short proof in full. 
\begin{lemma}\label{lemma:R=hat(R)}
	Let $(X,\rho_{X})$ be a metric space, $C\subseteq X$ be a closed set and $f\colon C\to C$ be a nonexpansive mapping which is contractive in the sense of Rakotch. Then
	\begin{equation*}
	R(f)=\widehat{R}(f).
	\end{equation*}
\end{lemma}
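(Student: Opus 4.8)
The plan is to prove the nontrivial inclusion $\widehat{R}(f)\subseteq R(f)$, since the reverse inclusion $R(f)\subseteq\widehat{R}(f)$ has already been recorded above as a consequence of $\lip(f,x)\leq\widehat{\lip}(f,x)$. So I would fix a point $x\in\widehat{R}(f)$, meaning $\widehat{\lip}(f,x)=1$, and aim to deduce that $\lip(f,x)=1$. Both quantities are bounded above by $\lip f\leq 1$, so it suffices to exhibit, inside every ball $B(x,r)$, points $y$ for which the difference quotient $\rho(f(y),f(x))/\rho(y,x)$ is arbitrarily close to $1$.

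The key device is the Rakotch function $\phi_f$: for every $y\in C\setminus\{x\}$ one has the pointwise bound $\rho(f(y),f(x))/\rho(y,x)\leq\phi_f(\rho(y,x))$. Since $\widehat{\lip}(f,x)=1$, I can select a sequence $(y_n)$ in $C\setminus\{x\}$ along which the difference quotients converge to $1$; feeding this into the bound immediately yields $\phi_f(\rho(y_n,x))\to 1$.

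The main step, and the only genuine obstacle, is to show that this forces $\rho(y_n,x)\to 0$. I would argue by contradiction: if some subsequence stayed bounded away from $x$, say $\rho(y_{n_j},x)\geq\delta>0$, then the monotonicity of $\phi_f$ would give $\phi_f(\rho(y_{n_j},x))\leq\phi_f(\delta)$, whereas $\phi_f(\delta)<1$ precisely because $\delta>0$. This contradicts $\phi_f(\rho(y_n,x))\to 1$. Hence the near-maximising points must concentrate at $x$. It is exactly here that both defining features of a Rakotch contraction are used, namely that $\phi_f$ is decreasing and that it is strictly below $1$ away from the origin; note that for a mere strict contraction (constant $\phi_f\equiv L<1$) the conclusion would fail, which is what makes the Rakotch hypothesis essential.

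Finally I would assemble the conclusion. Given any $r>0$, all but finitely many of the $y_n$ lie in $B(x,r)\cap C$, so the inner supremum defining $\lip(f,x)$ at radius $r$ is at least the supremum of the difference quotients over this tail, which equals $1$; since that inner supremum is also at most $\lip f\leq 1$, it equals $1$ for every $r>0$. Passing to the limit as $r\searrow 0$ gives $\lip(f,x)=1$, so $x\in R(f)$. This establishes $\widehat{R}(f)\subseteq R(f)$ and therefore $R(f)=\widehat{R}(f)$.
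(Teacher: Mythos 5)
Your proposal is correct and follows essentially the same route as the paper: extract a sequence of near-maximising points, use the Rakotch bound to deduce $\phi_f(\rho(y_n,x))\to 1$, and then exploit the monotonicity of $\phi_f$ together with $\phi_f(t)<1$ for $t>0$ to force $\rho(y_n,x)\to 0$. Your subsequence-contradiction phrasing of this last step is just a cleaner rewording of the paper's $\limsup$-minus-$\varepsilon$ argument, and your explicit final assembly of $\lip(f,x)=1$ fills in a step the paper leaves implicit.
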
 
\begin{proof}
  We only need to verify the inclusion $\widehat{R}(f)\subseteq R(f)$. Given $x\in \widehat{R}(f)$, we can find a sequence $(y_{k})_{k=1}^{\infty}$ of points $y_{k}\in C$ such that
  \begin{equation*}
    \lim_{k\to \infty}\frac{\rho(f(y_{k}),f(x))}{\rho(y_{k},x)}=1.
  \end{equation*}
  On the other hand, we have
  \begin{equation*}
    \frac{\rho(f(y_{k}),f(x))}{\rho(y_{k},x)}\leq \phi_{f}(\rho(y_{k},x))\leq 1
  \end{equation*}
  for all $k\in\mathbb{N}$, where $\phi_{f}\colon [0,\operatorname{diam}(C)]\to[0,1]$ is the function witnessing that $f$ is a Rakotch contraction. Letting $k\to \infty$, we deduce that $\lim_{k\to\infty}\phi_{f}(\rho(y_{k},x))=1$. Since $\phi_{f}$ is decreasing, we additionally have
  \begin{equation*}
    1=\lim_{k\to\infty}\phi_{f}(\rho(y_{k},x))\leq \phi_{f}(\limsup_{k\to\infty}\rho(y_{k},x)-\varepsilon)
  \end{equation*}
  for every $\varepsilon\in (0,\limsup_{k\to\infty}\rho(y_{k},x))$. To see this, evaluate the limit on the left-hand side along a subsequence which realises $\limsup_{k\to\infty}\rho(y_{k},x)$. Given that $\phi_{f}(t)<1$ for $t>0$, this leaves only the possibility $\limsup_{k\to\infty}\rho(y_{k},x)=0$. Hence $y_{k}\to x$ and so $x\in R(f)$. 
\end{proof}
Using Lemma~\ref{lemma:R=hat(R)} and Theorem~\ref{thm:typicalRakotch}, we now show that for typical $f\in\mathcal{M}$ in the $\mathrm{CAT}(\kappa)$ setting, the set $R(f)$ is residual in $C$.
\begin{theorem}\label{thm:star_shaped_local_lip}
  Let $(X,\rho_{X})$ be a $\mathrm{CAT}(\kappa)$ space with $\kappa\in\mathbb{R}$, 
  \begin{equation*}
    D_{\kappa}:=\begin{cases}
      \infty & \text{ if }\kappa\leq 0,\\
      \frac{\pi}{\sqrt{\kappa}} & \text{ if }\kappa >0,
    \end{cases}
  \end{equation*} 
  $C\subset X$ be a bounded closed set and $\mathcal{M}$ denote the space of nonexpansive mappings on $C$ defined by \eqref{eq:space_non_exp_mappings}. Let $x\in C$ be such that $C\subset B(x,D_{\kappa}/2)$ and for every $z\in C$ the metric segement $[z,x]$ connecting $z$ to $x$ is contained in $C$. Then there is a $\sigma$-porous subset $\mathcal{P}$ of $\mathcal{M}$ such that for every $f\in \mathcal{M}\setminus \mathcal{P}$ the set 
  \begin{equation*}
    R(f):=\left\{y\in C\colon \lip(f,y)=1\right\}
  \end{equation*}
  is a residual subset of $C$.
\end{theorem}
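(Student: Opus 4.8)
The plan is to obtain the exceptional set $\mathcal{P}$ as the union of two $\sigma$-porous sets and then to read off residuality of $R(f)$ from the identity $R(f)=\widehat{R}(f)$ supplied by Lemma~\ref{lemma:R=hat(R)}. First I would let $\mathcal{P}_1\subseteq\mathcal{M}$ be the set of all $f\in\mathcal{M}$ which are \emph{not} contractive in the sense of Rakotch; since the hypotheses imposed here on $C$ and $x$ are precisely those of Theorem~\ref{thm:typicalRakotch}, that theorem guarantees that $\mathcal{P}_1$ is $\sigma$-porous in $\mathcal{M}$. For the second set I would invoke the companion result of \cite{BDR2017} concerning the global quantity $\widehat{\lip}$: there it is shown that, for the typical nonexpansive self-mapping of a closed star-shaped subset of a \emph{weakly hyperbolic} space, the set $\widehat{R}(f)$ is residual, the associated exceptional set $\mathcal{P}_2$ being $\sigma$-porous. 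Because every $\mathrm{CAT}(\kappa)$ space is weakly hyperbolic and the star-shapedness hypothesis assumed here (that $[z,x]\subseteq C$ for every $z\in C$) is exactly the one required there, this result applies in our setting. Finally I would set $\mathcal{P}:=\mathcal{P}_1\cup\mathcal{P}_2$, which is again $\sigma$-porous as a union of two $\sigma$-porous sets.

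The combination step is then immediate. Fix $f\in\mathcal{M}\setminus\mathcal{P}$. Since $f\notin\mathcal{P}_1$, the mapping $f$ is a nonexpansive Rakotch contraction, and since $C$ is closed, Lemma~\ref{lemma:R=hat(R)} yields $R(f)=\widehat{R}(f)$. Since $f\notin\mathcal{P}_2$, the set $\widehat{R}(f)$ is residual in $C$. Combining the two facts shows that $R(f)$ is residual in $C$, as required.

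It is worth recording why this short argument suffices. The inclusion $R(f)\subseteq\widehat{R}(f)$ holds for every $f$, but it is typically strict, so that the passage from the global witnesses counted by $\widehat{R}$ to the local witnesses counted by $R$ is genuinely delicate; in the hyperbolic case it was carried out directly in \cite[Corollary~3.9]{BDR2017}, and it is exactly this passage that had resisted treatment for $\kappa>0$. The Rakotch property is used to bypass that direct analysis: once $f$ is known to be a Rakotch contraction, Lemma~\ref{lemma:R=hat(R)} collapses the two sets, so that residuality of the more accessible set $\widehat{R}(f)$ transfers automatically to $R(f)$.

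The main obstacle I anticipate lies not in the combination itself, which is purely formal, but in verifying that the imported $\widehat{R}$-residuality statement genuinely covers the positive-curvature case. Concretely, one must confirm that the smallness hypothesis $C\subseteq B(x,D_{\kappa}/2)$ --- needed so that the geodesics towards $x$ are unique and the auxiliary contractions $s_{X,x,R,t}$ of Lemma~\ref{lem:Contr2} are well defined --- is compatible with, and implied by, the hypotheses under which the $\widehat{R}$ statement was established for weakly hyperbolic spaces, and that the notion of $\widehat{R}$ there coincides with the one used here. The genuinely hard, curvature-sensitive content of the positive-curvature problem has, however, already been isolated and resolved in Theorem~\ref{thm:typicalRakotch}; the present theorem then extracts its consequence for local Lipschitz witnesses essentially for free.
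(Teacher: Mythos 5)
Your proposal is correct and follows essentially the same route as the paper: the authors also take $\mathcal{P}$ to be the union of the $\sigma$-porous set given by Theorem~\ref{thm:typicalRakotch} and the $\sigma$-porous set $\widetilde{\mathcal{N}}$ obtained from \cite[Theorem~3.7]{BDR2017} (the $\widehat{R}$-residuality result, applied with $C_X=X$, $D_X=D_\kappa$, $Y=X$, $C_Y=C$), and then use Lemma~\ref{lemma:R=hat(R)} to identify $R(f)$ with $\widehat{R}(f)$ for Rakotch contractive $f$. The compatibility concern you raise at the end is resolved exactly as you anticipate, since $\mathrm{CAT}(\kappa)$ spaces are weakly hyperbolic and the star-shapedness hypothesis matches; the paper additionally disposes of the trivial case $C=\{x\}$ separately.
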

\begin{proof}
  The three key ingredients of the proof are Theorem~\ref{thm:typicalRakotch}, Lemma~\ref{lemma:R=hat(R)} and \cite[Theorem~3.7]{BDR2017}; the argument works in the same way as that establishing \cite[Corollary~3.9]{BDR2017} in \cite[Remark~3.8]{BDR2017}. We may assume that $C$ is not the singleton $\left\{x\right\}$, as otherwise $\mathcal{M}$ is also a singleton and the statement becomes trivial. Applying \cite[Theorem~3.7]{BDR2017} to $X$, $C_{X}=X$, $D_{X}=D_{\kappa}$, $Y=X$, $C_{Y}=C$ and $\mathcal{M}=\mathcal{M}(C_{X},C_{Y})$ (in the notation of \cite{BDR2017}), we obtain a $\sigma$-porous set  $\widetilde{\mathcal{N}}\subseteq \mathcal{M}$ such that for every $f\in \mathcal{M}\setminus\widetilde{\mathcal{N}}$ the set $\widehat{R}(f)$ is a residual subset of $C$. Moreover, for a Rakotch contractive $f\in\mathcal{M}\setminus \widetilde{\mathcal{N}}$ this set $\widehat{R}(f)$ coincides with $R(f)$, by Lemma~\ref{lemma:R=hat(R)}. Therefore, the desired $\sigma$-porous set $\mathcal{P}\subset \mathcal{M}$ may be taken as the union of $\widetilde{\mathcal{N}}$ and the $\sigma$-porous set $\mathcal{N}$ given by the conclusion of Theorem~\ref{thm:typicalRakotch}. 
\end{proof}
\section{Cases where the set of Banach contractions is not even dense}
We consider the case $X=\mathbb{S}^{n}$ of the $n$-dimensional sphere with the intrinsic metric. For a closed subset $C\subset X$, we define
\[
  \mathcal{M} := \{f\colon C\to C\colon \lip f \leq 1\}
\]
equipped with the metric
\[
  d_{\infty}(f,g) := \sup\{\rho_{\mathbb{S}^{n}}(f(x),g(x))\colon x\in C\}.
\]
It is not difficult to see that if $C$ is not convex, then the set of strict contractions need not be dense 
in~$\mathcal{M}$.
\begin{lemma}\label{lem:contractions-not-dense}
  We denote by $\mathbb{S}^{n}$ the $n$-dimensional sphere. Let $x\in \mathbb{S}^n$, $n\in\mathbb{N}\setminus\{0\}$, $0<r<R<\pi$ and $C=\overline{B(x,R)}\setminus B(x,r)$. The set of mappings which are strict contractions is not dense in $\mathcal{M}$.
\end{lemma}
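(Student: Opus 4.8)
The plan is to exhibit a single map $\sigma\in\mathcal{M}$ together with a radius $\delta>0$ such that the open ball $B(\sigma,\delta)\subseteq\mathcal{M}$ contains no strict contraction; this is exactly what it means for strict contractions to fail to be dense. The obstruction I would exploit is structural: removing the inner ball turns $C$ into an annulus admitting a \emph{fixed-point-free isometry}, whereas every strict contraction on the complete space $C$ is \emph{forced} to have a fixed point by Banach's fixed point theorem. Pitting these two facts against each other gives the separation.

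Concretely, let $\sigma\colon\mathbb{S}^{n}\to\mathbb{S}^{n}$ be the reflection of the sphere in the axis through $x$ and $-x$, that is, $\sigma(y)=2\langle x,y\rangle x-y$ computed in the ambient space $\mathbb{R}^{n+1}\supseteq\mathbb{S}^{n}$. A one-line computation shows $|\sigma(y)|=1$, so $\sigma$ maps $\mathbb{S}^{n}$ to itself; as an element of $O(n+1)$ it is an isometry of $(\mathbb{S}^{n},\rho_{\mathbb{S}^{n}})$, whence $\lip\sigma=1$ and $\sigma\in\mathcal{M}$ once we check that $\sigma(C)=C$. Since $\sigma(x)=x$, the isometry $\sigma$ preserves the distance to $x$, hence preserves each sphere $\{y\colon\rho_{\mathbb{S}^{n}}(x,y)=s\}$ and therefore the annulus $C=\{y\colon r\le\rho_{\mathbb{S}^{n}}(x,y)\le R\}$. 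Moreover, $\sigma(y)=y$ forces $\langle x,y\rangle x=y$, i.e.\ $y=\pm x$, so the only fixed points of $\sigma$ on $\mathbb{S}^{n}$ are the poles $\pm x$; as $\rho_{\mathbb{S}^{n}}(x,x)=0<r$ and $\rho_{\mathbb{S}^{n}}(x,-x)=\pi>R$, neither pole lies in $C$, and thus $\sigma$ has no fixed point in $C$.

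Next I would invoke compactness. The set $C$ is closed in the compact space $\mathbb{S}^{n}$, hence compact and in particular complete. The continuous displacement function $y\mapsto\rho_{\mathbb{S}^{n}}(\sigma(y),y)$ therefore attains its minimum on $C$, and because $\sigma$ is fixed-point-free on $C$ this minimum is some $2\delta>0$. Now let $g\in\mathcal{M}$ be an arbitrary strict contraction. Being a Banach contraction on the complete space $C$, it has a fixed point $p\in C$, and then
\[
  d_{\infty}(\sigma,g)\ge\rho_{\mathbb{S}^{n}}(\sigma(p),g(p))=\rho_{\mathbb{S}^{n}}(\sigma(p),p)\ge 2\delta.
\]
Hence no strict contraction lies within distance $\delta$ of $\sigma$, which proves the lemma.

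The verifications that $\sigma$ is an isometry preserving $C$ and that its fixed-point set on $\mathbb{S}^{n}$ is exactly $\{\pm x\}$ are elementary; the real content is the idea of confronting a fixed-point-free isometry with the unavoidable fixed point of a strict contraction. The one point requiring genuine care is the passage from \emph{absence} of a fixed point to a \emph{uniform} lower bound $2\delta$ on the displacement: this is precisely where compactness of $C$ is used, since without it the infimum $\inf_{y\in C}\rho_{\mathbb{S}^{n}}(\sigma(y),y)$ could vanish even though $\sigma$ fixes no point of $C$.
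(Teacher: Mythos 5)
Your proof is correct and follows essentially the same strategy as the paper: both exhibit a fixed-point-free isometry of the annulus $C$ whose displacement is uniformly bounded below, and play it off against the fixed point that Banach's theorem forces upon any strict contraction, so that no strict contraction can be uniformly close to that isometry. The only differences are minor: the paper uses a rotation with axis through $x$ and asserts the displacement bound $\rho(y,f(y))\geq\varepsilon$ directly, whereas you use the reflection $y\mapsto 2\langle x,y\rangle x-y$ and obtain the bound from compactness --- your variant is, if anything, slightly more robust, since the reflection preserves $C$ and is fixed-point-free on it verbatim for every $n\geq 1$ (including $n=1$, where a small rotation would not even map the two arcs of $C$ into themselves).
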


\begin{proof}
  Let $\varepsilon>0$ be small enough and let $f:C\to C$ be the rotation with axis $x$ such that $\rho(y,f(y)) \geq \varepsilon$ for all $y\in C$. Assume that there is a strict contraction $g\in B(f, \tfrac{\varepsilon}{2})\subset \mathcal{M}$. Then $g$ has a fixed point $z\in C$. On the other hand, the inequalities \[\varepsilon \leq \rho(z,f(z)) = \rho(g(z),f(z))\leq \mathrm{d}_\infty (g,f) < \frac{\varepsilon}{2}\]
  hold, which is a contradiction.
\end{proof}

As a direct consequence of this Lemma we obtain the following Corollary.

\begin{corollary}
  Let $Y$ be a complete metric space such that a scaled version of $\mathbb{S}^n$ is contained in $Y$ as an isometric copy for some $n\in \mathbb{N}\setminus\{0\}$. Then there is a connected closed subset $C\subset Y$ such that the set of strict contractions is not dense in $\mathcal{M}$.
\end{corollary}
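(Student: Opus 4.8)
The plan is to transport the construction of Lemma~\ref{lem:contractions-not-dense} into $Y$, choosing the subset $C$ so that connectedness is automatic. By hypothesis there is an isometric embedding $\iota$ of a scaled copy of $\mathbb{S}^{n}$, say scaled by a factor $\lambda>0$ so that its intrinsic diameter is $\lambda\pi$, into the complete space $Y$. I would simply take $C:=\iota(\lambda\mathbb{S}^{n})$ to be the \emph{entire} image. Since $\mathbb{S}^{n}$ is connected for $n\geq 1$ and $\iota$ is continuous, $C$ is connected; since the scaled sphere is compact, $C$ is a compact and hence closed subset of $Y$, and the metric $\rho$ it inherits from $Y$ agrees with the scaled intrinsic spherical metric.

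The key point is that on the full sphere one must replace the rotation used in Lemma~\ref{lem:contractions-not-dense} by the antipodal map, since a rotation of $\mathbb{S}^{n}$ has fixed points whereas the antipodal map $y\mapsto -y$ has none. Transporting the antipodal map through $\iota$ yields a map $f\colon C\to C$ which is an isometry, hence $f\in\mathcal{M}$, and which satisfies $\rho(z,f(z))=\lambda\pi$ for every $z\in C$, the displacement being exactly the diameter. This uniform, fixed-point-free displacement is precisely the feature exploited in the proof of the Lemma, and here it comes for free without excising any caps from the sphere.

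With these ingredients the contradiction argument is identical to that of Lemma~\ref{lem:contractions-not-dense}. I would suppose, for contradiction, that the strict contractions are dense and pick a strict contraction $g\in B(f,\lambda\pi/2)\subset\mathcal{M}$. As $C$ is complete, Banach's fixed point theorem provides a fixed point $z$ of $g$, and then
\[
  \lambda\pi = \rho(z,f(z)) = \rho(g(z),f(z)) \leq d_{\infty}(g,f) < \frac{\lambda\pi}{2}
\]
gives the desired contradiction. Hence no strict contraction lies in the ball $B(f,\lambda\pi/2)$, so the strict contractions are not dense in $\mathcal{M}$.

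I expect no serious obstacle here; the only point requiring care is connectedness, which is exactly why I would use the whole sphere together with the antipodal map rather than the annulus of Lemma~\ref{lem:contractions-not-dense} (the latter is disconnected when $n=1$). For $n\geq 2$ one could alternatively transport the annulus $\overline{B(x,R)}\setminus B(x,r)$ verbatim, as it is then connected, but the antipodal-map construction has the advantage of treating all $n\geq 1$ uniformly.
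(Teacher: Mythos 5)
Your proof is correct, but it takes a different route from the paper's. The paper obtains the corollary as a direct consequence of Lemma~\ref{lem:contractions-not-dense}: one transports the annulus $C=\overline{B(x,R)}\setminus B(x,r)$ into $Y$ through the isometric embedding and invokes the lemma, whose rotation-based contradiction argument then needs no repetition. You instead take $C$ to be the entire embedded sphere and use the antipodal map, re-running the Banach fixed-point contradiction from scratch. Each approach buys something: the paper's is a one-line reduction, but---as you rightly observe---the annulus in $\mathbb{S}^{1}$ consists of two disjoint arcs, so for $n=1$ that reduction does not literally produce a \emph{connected} set; your whole-sphere construction treats all $n\geq 1$ uniformly and thereby closes this small gap. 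A further advantage of the antipodal map is that it displaces every point by exactly the diameter $\lambda\pi$, so no care is needed in selecting an isometry whose displacement is bounded away from zero on $C$ (on higher-dimensional spheres a rotation fixing the centre of the annulus can have additional fixed points at distance $\pi/2$ from the centre when $n\geq 3$ is odd, so the lemma's rotation must be chosen with some care there). All the individual steps you give---compactness, closedness and connectedness of $C$, the metric inherited from $Y$ agreeing with the scaled intrinsic metric, $f=\iota\circ a\circ\iota^{-1}$ being an isometry and hence lying in $\mathcal{M}$, and completeness of $C$ so that Banach's theorem applies to a nearby strict contraction---are correct.
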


\begin{example}
  Here are three examples of spaces $Y$ to which this corollary can be applied.
  \begin{enumerate}
  \item The unit sphere of a real Hilbert space.
  \item The set
    \[
      Y := \{x\in \mathbb{R}^{n+1}\colon \|x\|_\infty \leq 2\;\text{and}\;\|x\|_2\geq 1\}
    \]
    with the intrinsic metric induced by $\mathbb{R}^{n+1}$.
  \item The set
    \[
      Y := \{x\in \mathbb{R}^{n+1}\colon 1 \leq \|x\|_2\leq 2\}
    \]
    with the intrinsic metric induced by $\mathbb{R}^{n+1}$.
  \end{enumerate}
\end{example}

Note that on the sphere, balls of radius larger than $\frac{\pi}{2}$ are not convex, since for a pair of points whose distance to the centre is larger than $\frac{\pi}{2}$ and lying on ``opposite sides'' of the centre, the unique metric segment connecting them passes through the complement of the ball. 

\begin{proposition}
  Let $C= \overline{B(x,R)}$ for some $x\in \mathbb{S}^n$ and $R\in [0,\pi[$. Then every continuous $f: C\to C$  
  has a fixed point.
\end{proposition}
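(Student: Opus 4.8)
The plan is to reduce the statement to Brouwer's fixed point theorem by exhibiting $C$ as a homeomorphic copy of a closed Euclidean ball, and then to transport a fixed point back along the homeomorphism. The case $R=0$ is trivial, since then $C=\{x\}$, so assume $0<R<\pi$. The crucial structural observation is that the antipode $-x$, which is the unique point at distance $\pi$ from $x$, does \emph{not} belong to $C$, because every point of $C$ lies within distance $R<\pi$ of $x$. This is precisely what makes $R<\pi$ (rather than the weaker failure of convexity at $R>\pi/2$) the relevant hypothesis.

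First I would pass to geodesic polar (exponential) coordinates centred at $x$. Identifying the tangent space $T_x\mathbb{S}^n$ with $\mathbb{R}^n$, consider the exponential map $\exp_x\colon\mathbb{R}^n\to\mathbb{S}^n$, which sends a vector $v$ to the endpoint of the unit-speed geodesic issuing from $x$ in direction $v/\lvert v\rvert$ of length $\lvert v\rvert$. On the round sphere one has $\rho_{\mathbb{S}^n}(x,\exp_x v)=\lvert v\rvert$ whenever $\lvert v\rvert\le\pi$; consequently $\exp_x$ carries the closed Euclidean ball $\overline{B_{\mathbb{R}^n}(0,R)}$ exactly onto $C=\overline{B(x,R)}$, the forward inclusion being immediate and the reverse following by writing each $y\in C$ (which satisfies $y\neq -x$) as $\exp_x$ of the scaled initial direction of the geodesic from $x$ to $y$.

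The main step, and the only place I expect real work, is to verify that this restriction is a homeomorphism. Injectivity is where $R<\pi$ is essential: if $\exp_x v_1=\exp_x v_2$ with $\lvert v_1\rvert,\lvert v_2\rvert\le R<\pi$, then the common image lies at distance strictly less than $\pi$ from $x$, and such a point is joined to $x$ by a \emph{unique} minimizing geodesic, forcing $v_1=v_2$. Continuity of $\exp_x$ is standard, and a continuous bijection from a compact space onto a Hausdorff space is automatically a homeomorphism, so no separate check of the inverse is needed. The injectivity claim is exactly the statement that the injectivity radius of the round sphere equals $\pi$; it can be cited, or argued directly from the elementary fact that two points at spherical distance less than $\pi$ determine a single great-circle arc. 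I regard this unique-geodesic point as the principal obstacle, everything else being formal.

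Finally I would transfer the fixed point property across the homeomorphism $\Phi:=\exp_x|_{\overline{B_{\mathbb{R}^n}(0,R)}}$. Any continuous $f\colon C\to C$ induces the continuous self-map $g:=\Phi^{-1}\circ f\circ\Phi$ of the compact convex set $\overline{B_{\mathbb{R}^n}(0,R)}\subset\mathbb{R}^n$; by Brouwer's fixed point theorem $g$ has a fixed point $w$, and then $\Phi(w)$ satisfies $f(\Phi(w))=\Phi(g(w))=\Phi(w)$, which is the desired fixed point of $f$. As an alternative to $\exp_x$ one could use stereographic projection from $-x$, which by rotational symmetry about the axis through $\pm x$ sends the metric spheres $\{\rho_{\mathbb{S}^n}(x,\cdot)=t\}$ to Euclidean spheres centred at the image of $x$, hence carries $C$ onto a genuine Euclidean ball; the remainder of the argument is identical.
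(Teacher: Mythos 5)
Your proposal is correct and takes essentially the same approach as the paper: reduce to Brouwer's fixed point theorem by exhibiting a homeomorphism from $C$ onto a compact convex subset of $\mathbb{R}^n$ (your supplied detail that $R<\pi$ keeps $-x$ out of $C$ and keeps $\exp_x$ injective is exactly the point the paper leaves implicit), then transport the fixed point back. The paper's homeomorphism is the stereographic projection, which you yourself offer as an alternative to the exponential map, so the difference is purely cosmetic.
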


\begin{proof}
  There is a homeomorphism $h:C\to K\subset \mathbb{R}^n$ with $K$ compact and convex; we may take, for example, the stereographic projection which maps the hemisphere onto a disc. The mapping $F: K\to K$ defined by $F = h \circ f \circ h^{-1}$ is a continuous self-mapping of a compact and convex set and has therefore a fixed point $z$ in $K$ by Brouwer's fixed point theorem. Then $x = h^{-1}(z) \in C$ is a fixed point of $f$.
\end{proof}

\begin{remark}
  In \cite[p. 70, Theorem 13.13.]{MR0461107} J.~H.~Wells and L.~R.~Williams proved that a nonexpansive mapping from a subset of the sphere into the sphere, whose range has diameter greater than $\pi$, or equivalently whose range is contained in no hemisphere, can be extended to an isometry on the whole sphere. Therefore strict contractions are not dense even in the case where $C=\overline{B(x,R)}$ for $R>\frac{\pi}{2}$.

  Combined with our result above, this shows that on $C$, although the set of strict contractions is not dense in the space of nonexpansive self-mappings, \emph{every} continuous self-mapping has a fixed point.
\end{remark}

\vskip2mm
\noindent\textbf{Acknowledgments.}
The first and the third author were supported by the Austrian Science Fund FWF (grant number P~32523-N). The second author was supported by the Austrian Science Fund FWF (grant number P~30902-N35).
The fourth author was partially supported by the Israel Science Foundation (Grant 820/17), by the Fund for the Promotion of Research at the Technion and by the Technion General Research Fund.


\vspace{8mm}
\noindent
Christian Bargetz\\
Department of Mathematics\\
University of Innsbruck\\
Technikerstraße 13,
6020 Innsbruck,
Austria\\
\texttt{christian.bargetz@uibk.ac.at}\\[3mm]
Michael Dymond\\
Department of Mathematics\\
University of Innsbruck\\
Technikerstraße 13,
6020 Innsbruck,
Austria\\
\texttt{michael.dymond@uibk.ac.at}\\[3mm]
Emir Medjic\\
Department of Mathematics\\
University of Innsbruck\\
Technikerstraße 13,
6020 Innsbruck,
Austria\\
\texttt{emir.medjic@uibk.ac.at}\\[3mm]
Simeon Reich\\
Department of Mathematics\\
The Technion---Israel Institute of Technology\\
32000 Haifa,
Israel\\
\texttt{sreich@math.technion.ac.il}

\end{document}